\documentclass[reqno]{amsart}

\usepackage[latin1]{inputenc}
\usepackage{amssymb}
\usepackage{graphicx}
\usepackage{amscd}
\usepackage[hidelinks]{hyperref}
\usepackage{color}
\usepackage{float}
\usepackage{graphics,amsmath,amssymb}
\usepackage{amsthm}
\usepackage{amsfonts}
\usepackage{latexsym}
\usepackage{epsf}
\usepackage{enumerate}
\usepackage{xifthen}
\usepackage{mathrsfs}
\usepackage{dsfont}
\usepackage{makecell}
\usepackage[FIGTOPCAP]{subfigure}
\usepackage{amsmath}
\allowdisplaybreaks[4]
\usepackage{listings}
\usepackage{etoolbox}
\usepackage{fancyhdr}

\setlength{\headheight}{11pt}

\pagestyle{fancy}
\fancyhead[LO]{\footnotesize\shorttitle}
\fancyhead[RO]{\footnotesize\thepage}
\fancyhead[LE]{\footnotesize\thepage}
\fancyhead[RE]{\footnotesize\shortauthors}
\fancyfoot{}

\lstset{
    basicstyle=\ttfamily,
    xleftmargin=2em,xrightmargin=1em,
    breaklines=true
}



 \newtheoremstyle{mytheorem}
 {3pt}
 {3pt}
 {\slshape}
 {}
 {\bfseries}
 {.}
 { }
 {}

\numberwithin{equation}{section}

\theoremstyle{theorem}
\newtheorem{theorem}{Theorem}[section]

\newtheorem{lemma}[theorem]{Lemma}

\theoremstyle{definition}

\renewcommand{\MR}[1]{\href{http://www.ams.org/mathscinet-getitem?mr=#1}{MR#1}}

\newcommand{\Keywords}[1]{\ifthenelse{\isempty{#1}}{}{\smallskip \smallskip \noindent \textbf{Keywords}. #1}}
\newcommand{\MSC}[2][2010]{\ifthenelse{\isempty{#2}}{}{\smallskip \smallskip \noindent \textbf{#1MSC}. #2}}
\newcommand{\abstractnote}[1]{\ifthenelse{\isempty{#1}}{}{\smallskip \smallskip \noindent \textsuperscript{\dag}#1}}

\makeatletter
\def\specialsection{\@startsection{section}{1}%
  \z@{\linespacing\@plus\linespacing}{.5\linespacing}%
  {\normalfont}}
\def\section{\@startsection{section}{1}%
  \z@{.7\linespacing\@plus\linespacing}{.5\linespacing}%
  {\normalfont\scshape}}
\patchcmd{\@settitle}{\uppercasenonmath\@title}{\Large\boldmath}{}{}
\patchcmd{\@settitle}{\begin{center}}{\begin{flushleft}}{}{}
\patchcmd{\@settitle}{\end{center}}{\end{flushleft}}{}{}
\patchcmd{\@setauthors}{\MakeUppercase}{\normalsize}{}{}
\patchcmd{\@setauthors}{\centering}{\raggedright}{}{}
\patchcmd{\section}{\scshape}{\large\bfseries\boldmath}{}{}
\patchcmd{\subsection}{\bfseries}{\bfseries\boldmath}{}{}
\renewcommand{\@secnumfont}{\bfseries}
\patchcmd{\@startsection}{\@afterindenttrue}{\@afterindentfalse}{}{}
\patchcmd{\abstract}{\leftmargin3pc}{\leftmargin1pc}{}{}

\def\maketitle{\par
  \@topnum\z@ 
  \@setcopyright
  \thispagestyle{empty}
  \ifx\@empty\shortauthors \let\shortauthors\shorttitle
  \else \andify\shortauthors
  \fi
  \@maketitle@hook
  \begingroup
  \@maketitle
  \toks@\@xp{\shortauthors}\@temptokena\@xp{\shorttitle}%
  \toks4{\def\\{ \ignorespaces}}
  \edef\@tempa{%
    \@nx\markboth{\the\toks4
      \@nx\MakeUppercase{\the\toks@}}{\the\@temptokena}}%
  \@tempa
  \endgroup
  \c@footnote\z@
  \@cleartopmattertags
}
\makeatother




\title[A curious identity]{A curious identity and its applications to partitions with bounded part differences}

\author[S. Chern]{Shane Chern}
\address{Department of Mathematics, The Pennsylvania State University, University Park, PA 16802, USA}
\email{shanechern@psu.edu}

\date{}

\begin{document}

{\footnotesize\noindent \textit{New Zealand J. Math.} \textbf{47} (2017), 23--26. \MR{3691619}.}

\bigskip \bigskip

\maketitle

\begin{abstract}

In this note, we present a curious $q$-series identity with applications to certain partitions with bounded part differences.

\Keywords{Partition, bounded part difference, generating function.}

\MSC{Primary 05A17; Secondary 11P84.}
\end{abstract}

\section{Introduction}

A \textit{partition} of a positive integer $n$ is a non-increasing sequence of positive integers whose sum is $n$. Recently, motivated by the work of Andrews, Beck and Robbins \cite{ABR2015}, Breuer and Kronholm \cite{BK2016} obtained the generating function of partitions where the difference between largest and smallest parts is at most a fixed positive integer $t$,
\begin{equation}\label{eq:BK}
\sum_{n\ge 1}p_t(n) q^n=\frac{1}{1-q^t}\left(\frac{1}{(q;q)_t}-1\right),
\end{equation}
where $p_t(n)$ denotes the number of such partitions of $n$. Here and in what follows, we use the standard $q$-series notation
$$(a;q)_n:=\prod_{k=0}^{n-1} (1-a q^k),\quad \text{for $|q|<1$}.$$

Subsequently, the author and Yee \cite{Che2017,CY2017} considered an overpartition analogue of Breuer and Kronholm's result. Here an \textit{overpartition} of $n$ is a partition of $n$ where the first occurrence of each distinct part may be overlined. Let $g_t(m,n)$ count the number of overpartitions of $n$ in which there are exactly $m$ overlined parts,  the difference between largest and smallest parts  is at most $t$, and if the difference between largest and smallest parts is exactly $t$, then the largest parts cannot be overlined. The author and Yee proved
\begin{equation}\label{eq:CY}
\sum_{n\ge 1}\sum_{m\ge 0} g_t(m,n)z^mq^n=\frac{1}{1-q^t}\left(\frac{(-zq;q)_t}{(q;q)_t}-1\right).
\end{equation}

Suggested by George E. Andrews, it is also natural to study other types of partitions with bounded part differences. Let $pd_t(n)$ (resp. $po_t(n)$) count the number of partitions of $n$ in which all parts are distinct (resp. odd) and the difference between largest and smallest parts is at most $t$.
\begin{theorem}
We have
\begin{equation}\label{eq:pdt}
\sum_{n\ge 1}pd_t(n) q^n=\frac{1}{1-q^{t+1}}\left((-q;q)_{t+1}-1\right),
\end{equation}
and
\begin{equation}\label{eq:pot}
\sum_{n\ge 1}po_{2t}(n) q^n=\frac{1}{1-q^{2t}}\left(\frac{1}{(q;q^2)_t}-1\right).
\end{equation}
\end{theorem}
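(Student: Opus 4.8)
The strategy is to reduce each identity to a statement about partitions with a fixed smallest part, then sum a geometric-type series — exactly the mechanism behind \eqref{eq:BK} and \eqref{eq:CY}. I will carry out the two cases in parallel since they are structurally identical.

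For \eqref{eq:pdt}: I partition the set of partitions counted by $pd_t(n)$ according to the smallest part $s\ge 1$. If the smallest part is $s$, then every part lies in $\{s, s+1, \dots, s+t\}$, each part is used at most once (distinctness), and the part $s$ must actually appear. The generating function for choosing a subset of $\{s, s+1, \dots, s+t\}$ that contains $s$ is
\[
q^s (1+q^{s+1})(1+q^{s+2})\cdots(1+q^{s+t}) = q^s \prod_{j=1}^{t}(1+q^{s+j}).
\]
Summing over $s\ge 1$ gives $\sum_{n\ge1} pd_t(n)q^n = \sum_{s\ge1} q^s \prod_{j=1}^{t}(1+q^{s+j})$. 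The key step is to evaluate this sum in closed form. I expect this is where the ``curious identity'' advertised in the abstract enters: one needs a telescoping identity showing
\[
\sum_{s\ge 1} q^s \prod_{j=1}^{t}(1+q^{s+j}) = \frac{(-q;q)_{t+1}-1}{1-q^{t+1}}.
\]
The natural way to see this is to write $\prod_{j=1}^t(1+q^{s+j})$ in terms of a shift: multiplying by $(1+q^s)$ turns it into $\prod_{j=0}^t (1+q^{s+j})$, which is a ``window'' of the infinite product $(-q;q)_\infty$ starting at $s$. One then looks for constants making
\[
q^s\prod_{j=1}^t(1+q^{s+j}) = A_s - A_{s+1}
\]
telescoping; a good guess is $A_s = c\cdot q^{s-1}\prod_{j=0}^{t}(1+q^{s+j})$ for a suitable constant $c$ depending on $q$ and $t$, since the ratio $A_s/A_{s+1}$ involves exactly the factor $(1+q^s)/(1+q^{s+t+1})$, i.e.\ the endpoints of the window. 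Matching coefficients should force $c = 1/(1-q^{t+1})$ (up to the additive constant coming from the $s\to\infty$ boundary term, which supplies the ``$-1$''). Verifying this telescoping identity — equivalently, checking the single algebraic identity $q^s\prod_{j=1}^t(1+q^{s+j})(1-q^{t+1}) = q^{s-1}\prod_{j=0}^t(1+q^{s+j}) - q^s\prod_{j=1}^{t+1}(1+q^{s+j})$ — is the main obstacle, though it is a routine polynomial manipulation once set up correctly.

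For \eqref{eq:pot}: the argument is the same with ``distinct'' replaced by ``odd, with repetition allowed'' and $t$ replaced by the bound $2t$. If the smallest (odd) part is $s$, then all parts lie in the odd numbers in $\{s, s+2, \dots, s+2t\}$ — there are $t+1$ such values $s, s+2, \dots, s+2t$ — each may be repeated arbitrarily, and $s$ must appear. The generating function is
\[
\frac{q^s}{1-q^s}\cdot\frac{1}{(1-q^{s+2})(1-q^{s+4})\cdots(1-q^{s+2t})} = \frac{q^s}{\prod_{j=0}^{t}(1-q^{s+2j})}.
\]
Summing over odd $s\ge 1$, say $s = 2r-1$ with $r\ge 1$, gives $\sum_{n\ge1} po_{2t}(n)q^n = \sum_{r\ge 1} q^{2r-1}\big/\prod_{j=0}^t(1-q^{2r-1+2j})$. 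I then expect the same kind of telescoping, now in the variable $q^2$: the summand should equal $B_r - B_{r+1}$ with $B_r$ a constant times $1/\prod_{j=1}^{t}(1-q^{2r-3+2j})$ (a window of $t$ factors rather than $t+1$), the constant being $1/(1-q^{2t})$, and the boundary term at $r\to\infty$ (where the product tends to $1$ and $q^{2r-1}\to 0$) accounting for the ``$-1$''. Again the crux is verifying one algebraic identity; everything else is bookkeeping. It is worth noting both closed forms can be checked independently by confirming the first few Taylor coefficients, which I would do as a sanity check before committing to the telescoping constants.
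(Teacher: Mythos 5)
Your combinatorial setup is exactly the paper's: both identities are obtained by classifying partitions according to their smallest part, and the two sums you arrive at are correct. Where you diverge is in evaluating those sums. The paper recognizes them as specializations $S(-q,0;q;t+1)$ and $q^{-1}S(0,q^{-1};q^2;t)$ of its general sum and invokes Theorem \ref{th:main}, whose proof runs through the $q$-Kummer--Thomae--Whipple transformation and the $q$-Chu--Vandermonde sum; you instead propose a direct telescoping. That is a legitimate and genuinely more elementary route --- both sums really are telescoping --- but your certificate for \eqref{eq:pdt} is wrong. With $A_s=c\,q^{s-1}\prod_{j=0}^{t}(1+q^{s+j})$, the ``single algebraic identity'' you reduce everything to is false: dividing it by $\prod_{j=1}^{t}(1+q^{s+j})$ leaves
\[
q^{s}-q^{s+t+1}\;=\;q^{s-1}(1+q^{s})-q^{s}(1+q^{s+t+1}),
\]
which at $s=t=1$ reads $q-q^{3}=1-q^{4}$. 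Moreover, with that $A_s$ one has $A_s\to 0$ as $s\to\infty$ because of the prefactor $q^{s-1}$, so the boundary term cannot supply the $-1$; your verbal description of the limit contradicts your own formula. So the step you defer as ``routine polynomial manipulation once set up correctly'' fails with the setup you actually wrote down.

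The repair is small but essential: delete the factor $q^{s-1}$ and take $A_s=\frac{1}{1-q^{t+1}}\prod_{j=0}^{t}(1+q^{s+j})$. Writing $P_s=\prod_{j=1}^{t}(1+q^{s+j})$, one then gets
\[
A_s-A_{s+1}=\frac{P_s}{1-q^{t+1}}\bigl[(1+q^{s})-(1+q^{s+t+1})\bigr]=q^{s}P_s,
\]
so the telescoping is immediate, $A_1=(-q;q)_{t+1}/(1-q^{t+1})$, and $\lim_{s\to\infty}A_s=1/(1-q^{t+1})$ supplies the $-1$, exactly as you intended. Your ansatz for \eqref{eq:pot} --- a constant times $1/\prod_{j=1}^{t}(1-q^{2r-3+2j})$, with the $-1$ coming from the nonzero limit at infinity --- is correct as stated, and the analogous computation does go through. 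Once repaired, your argument is a self-contained elementary proof that bypasses the paper's hypergeometric machinery entirely (though it proves only these two special cases, not the general identity \eqref{eq:main}); as written, however, the key identity underlying \eqref{eq:pdt} is false, which is a genuine gap.
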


Noting that \eqref{eq:BK}--\eqref{eq:pot} have the same flavor, we therefore want to seek for a unified proof of these generating function identities. 

Let $t$ be a fixed positive integer. Assume that $\alpha$, $\beta$, $q$ are complex variables with $|q|<1$, $q\ne 0$, $\alpha\ne \beta q$ and $(\beta q;q)_t\ne 0$. We define the following sum
\begin{equation}
S(\alpha,\beta;q;t):=\sum_{r\ge 1}\frac{(1-\alpha q^{r})(1-\alpha q^{r+1})\cdots(1-\alpha q^{r+t-2})}{(1-\beta q^{r})(1-\beta q^{r+1})\cdots(1-\beta q^{r+t})}q^r.
\end{equation}
The following curious identity provides such a unified approach.

\begin{theorem}\label{th:main}
We have
\begin{equation}\label{eq:main}
S(\alpha,\beta;q;t)=\frac{q}{(\beta q-\alpha)(1-q^t)}\left(\frac{(\alpha;q)_t}{(\beta q;q)_t}-1\right).
\end{equation}
\end{theorem}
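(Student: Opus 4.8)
\textit{Proof strategy.} The plan is to evaluate $S(\alpha,\beta;q;t)$ by exhibiting it as a telescoping series. First I would abbreviate the $r$-th summand as
\[
a_r:=\frac{(\alpha q^{r};q)_{t-1}}{(\beta q^{r};q)_{t+1}}\,q^{r},
\]
so that $S(\alpha,\beta;q;t)=\sum_{r\ge1}a_r$, with $t-1$ factors in the numerator and $t+1$ in the denominator. Reading off the shape of the right-hand side of \eqref{eq:main}, I would guess that the quotient $(\alpha;q)_t/(\beta q;q)_t$ is the leading term of a collapsing sum while the constant $-1$ is its tail; this suggests introducing
\[
T_r:=\frac{q}{(\beta q-\alpha)(1-q^{t})}\cdot\frac{(\alpha q^{r-1};q)_{t}}{(\beta q^{r};q)_{t}},\qquad r\ge1,
\]
which is well defined under the stated hypotheses, since $\alpha\ne\beta q$, $q^{t}\ne1$ (because $|q|<1$), and $(\beta q;q)_t\ne0$.

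The core of the proof is the identity $a_r=T_r-T_{r+1}$ for every $r\ge1$. To verify it I would place $T_r$ and $T_{r+1}$ over the common denominator $(\beta q^{r};q)_{t+1}$; after pulling out the block $(1-\alpha q^{r})\cdots(1-\alpha q^{r+t-2})=(\alpha q^{r};q)_{t-1}$ common to both numerators, the difference reduces to
\[
\frac{q}{(\beta q-\alpha)(1-q^{t})}\cdot\frac{(\alpha q^{r};q)_{t-1}}{(\beta q^{r};q)_{t+1}}\Bigl[(1-\alpha q^{r-1})(1-\beta q^{r+t})-(1-\alpha q^{r+t-1})(1-\beta q^{r})\Bigr].
\]
Expanding the bracket, the two constant terms cancel, the two $\alpha\beta q^{2r+t-1}$ terms cancel, and what remains is $\beta q^{r}(1-q^{t})-\alpha q^{r-1}(1-q^{t})=q^{r-1}(1-q^{t})(\beta q-\alpha)$. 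Substituting this back, the prefactor $q/\bigl((\beta q-\alpha)(1-q^{t})\bigr)$ is exactly cancelled and one is left with $q^{r}(\alpha q^{r};q)_{t-1}/(\beta q^{r};q)_{t+1}=a_r$, as claimed. This elementary two-line computation is the only real calculation in the argument, and I expect the main (mild) obstacle to be spotting the correct auxiliary sequence $T_r$, not verifying anything.

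Finally I would sum the telescoping series: for each $N\ge1$,
\[
\sum_{r=1}^{N}a_r=\sum_{r=1}^{N}(T_r-T_{r+1})=T_1-T_{N+1}.
\]
Since $|q|<1$ we have $q^{r}\to0$, hence $(\alpha q^{r-1};q)_t\to1$ and $(\beta q^{r};q)_t\to1$, so that $T_{N+1}\to q/\bigl((\beta q-\alpha)(1-q^{t})\bigr)$ as $N\to\infty$, while $T_1=\dfrac{q}{(\beta q-\alpha)(1-q^{t})}\cdot\dfrac{(\alpha;q)_t}{(\beta q;q)_t}$. Letting $N\to\infty$ yields
\[
S(\alpha,\beta;q;t)=\frac{q}{(\beta q-\alpha)(1-q^{t})}\left(\frac{(\alpha;q)_t}{(\beta q;q)_t}-1\right),
\]
which is \eqref{eq:main}. (Should a factor $1-\beta q^{r+t}$ happen to vanish in some denominator, one either appends the innocuous hypothesis $\beta q^{k}\ne1$ for all $k\ge1$, or reads \eqref{eq:main} as an identity of rational functions in $\alpha,\beta,q$ and concludes by analytic continuation.)
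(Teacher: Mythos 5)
Your proof is correct, and it takes a genuinely different route from the paper. The paper rewrites $S(\alpha,\beta;q;t)$ as a ${}_{3}\phi_{2}$ series, applies the $q$-analogue of the Kummer--Thomae--Whipple transformation to convert it into a (shifted) ${}_{2}\phi_{1}$, and then evaluates that by the first $q$-Chu--Vandermonde sum; the two lemmas from Gasper--Rahman and the NIST handbook do all the work. You instead exhibit the summand as a difference $a_r=T_r-T_{r+1}$ of the explicit sequence $T_r=\frac{q}{(\beta q-\alpha)(1-q^{t})}\cdot\frac{(\alpha q^{r-1};q)_{t}}{(\beta q^{r};q)_{t}}$ and telescope; I checked the bracket computation $(1-\alpha q^{r-1})(1-\beta q^{r+t})-(1-\alpha q^{r+t-1})(1-\beta q^{r})=q^{r-1}(1-q^{t})(\beta q-\alpha)$ and the limits $T_1$, $T_{N+1}\to q/((\beta q-\alpha)(1-q^{t}))$, and everything is in order. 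Your argument is entirely elementary and self-contained (a Gosper/WZ-style certificate, in effect), which is a real advantage: it needs no hypergeometric machinery and makes transparent where each hypothesis ($\alpha\ne\beta q$, $q^t\ne 1$, nonvanishing of the denominators) is used. What the paper's approach buys in exchange is that it places the identity inside the standard ${}_{3}\phi_{2}$ transformation theory, which suggests how such identities are found and how they might generalize. One small remark: your closing caveat about a factor $1-\beta q^{r+t}$ vanishing is essentially vacuous, since if any such factor vanished the series $S$ itself would have an undefined term; the sum being well defined already forces $\beta q^{k}\ne 1$ for all $k\ge 1$, which is all your $T_r$ need.
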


\section{Proof of Theorem \ref{th:main}}
Let
$${}_{r+1}\phi_r\left(\begin{matrix} a_0,a_1,a_2\ldots,a_r\\ b_1,b_2,\ldots,b_r \end{matrix}; q, z\right):=\sum_{n\ge 0}\frac{(a_0;q)_n(a_1;q)_n\cdots(a_r;q)_n}{(q;q)_n(b_1;q)_n\cdots (b_r;q)_n} z^n.$$
The following two lemmas are useful in our proof.

\begin{lemma}[First $q$-Chu--Vandermonde Sum {\cite[Eq. (17.6.2)]{And2010}}]\label{le:chu}
We have
\begin{equation}\label{eq:chu}
{}_{2}\phi_{1}\left(\begin{matrix} a,q^{-n}\\ c \end{matrix}; q, \frac{cq^{n}}{a}\right)=\frac{(c/a;q)_n}{(c;q)_n}.
\end{equation}
\end{lemma}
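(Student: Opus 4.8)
The stated identity is the terminating form of the $q$-Chu--Vandermonde sum, and the plan is to deduce it from the classical $q$-binomial theorem $\sum_{m\ge0}\frac{(A;q)_m}{(q;q)_m}z^m=\frac{(Az;q)_\infty}{(z;q)_\infty}$, valid for $|z|<1$, together with two elementary reflection formulas for $q$-shifted factorials. The first point to record is that the series on the left of \eqref{eq:chu} terminates: since $(q^{-n};q)_k=0$ for $k>n$, the sum runs only over $0\le k\le n$, so every manipulation below is finite and no convergence issue arises.

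The engine is a single Cauchy product. Writing $\frac{(xyz;q)_\infty}{(z;q)_\infty}=\frac{(xz;q)_\infty}{(z;q)_\infty}\cdot\frac{(xyz;q)_\infty}{(xz;q)_\infty}$ and expanding each of the three factors by the $q$-binomial theorem, comparison of the coefficient of $z^n$ yields the convolution identity $(xy;q)_n=\sum_{k=0}^{n}\qbinom{n}{k}(x;q)_{n-k}(y;q)_k\,x^k$. I would then divide by $(x;q)_n$, rewrite $(x;q)_{n-k}/(x;q)_n=1/(xq^{n-k};q)_k$, and apply the reflections $(xq^{n-k};q)_k=(-x)^k q^{k(n-k)+\binom{k}{2}}(q^{1-n}/x;q)_k$ and $\qbinom{n}{k}=\frac{(-1)^k q^{nk-\binom{k}{2}}}{(q;q)_k}(q^{-n};q)_k$. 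Once the powers of $q$ and the signs cancel, the right-hand side collapses to ${}_{2}\phi_{1}\left(\begin{matrix} q^{-n},\,y\\ q^{1-n}/x \end{matrix}; q, q\right)$, i.e. a closed form for this ${}_2\phi_1$ with unit argument $q$. Setting $c=q^{1-n}/x$ and simplifying the resulting factors by the reflection $(q^{1-n}\gamma;q)_n=(-\gamma)^n q^{-\binom{n}{2}}(\gamma^{-1};q)_n$ puts this companion identity into the standard shape ${}_{2}\phi_{1}\left(\begin{matrix} q^{-n},\,b\\ c \end{matrix}; q, q\right)=b^n\,(c/b;q)_n/(c;q)_n$, with $b:=y$. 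Finally, reversing the order of summation $k\mapsto n-k$ in the series of \eqref{eq:chu} turns it into a sum of exactly this companion type (with $b$ tied to $a$), and unwinding that evaluation produces the argument $cq^n/a$ and the clean value $(c/a;q)_n/(c;q)_n$ demanded by \eqref{eq:chu}.

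The main obstacle is purely bookkeeping: keeping the signs, the binomial exponents $\binom{k}{2}$ and $\binom{n}{2}$, and the various $q^{-n}$'s straight through the two reflection steps and the summation reversal, so that everything cancels to leave precisely $(c/a;q)_n/(c;q)_n$ with no stray power of $a$ or $q$. There is no conceptual difficulty beyond this. As a more self-contained alternative I would induct on $n$: the base case $n=0$ is immediate, and the $q$-Pascal-type splitting $(q^{-n};q)_k=(q^{1-n};q)_k-q^{-n}(1-q^k)(q^{1-n};q)_{k-1}$ reduces the inductive step to verifying the first-order recurrence $\Phi_n=\frac{1-(c/a)q^{n-1}}{1-cq^{n-1}}\,\Phi_{n-1}$, where $\Phi_n$ denotes the left-hand side of \eqref{eq:chu}; on that route, extracting this recurrence from the split is the crux.
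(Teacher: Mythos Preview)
Your derivation is sound: the Cauchy-product route to the convolution $(xy;q)_n=\sum_{k}\qbinom{n}{k}(x;q)_{n-k}(y;q)_k\,x^k$, followed by the two reflection formulas and the summation reversal, is a standard and correct way to obtain \eqref{eq:chu}. The bookkeeping you flag is genuine but routine, and the inductive alternative you sketch also works.

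However, there is nothing to compare against. In the paper this lemma is not proved at all; it is simply quoted from the NIST handbook \cite[Eq.~(17.6.2)]{And2010} as a known summation formula and then invoked as a black box inside the proof of Theorem~\ref{th:main}. So your proposal is not a different route to the paper's argument --- it is a self-contained proof of a result the paper treats as background. That is perfectly fine as an exercise, but if the goal is to reproduce the paper's treatment of Lemma~\ref{le:chu}, a citation suffices.
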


\begin{lemma}[$q$-Analogue of the Kummer--Thomae--Whipple Transformation {\cite[p. 72, Eq. (3.2.7)]{GR2004}}]\label{le:32}
We have
\begin{equation}\label{eq:32}
{}_{3}\phi_{2}\left(\begin{matrix} a,b,c\\ d,e \end{matrix}; q, \frac{de}{abc}\right)=\frac{(e/a;q)_\infty(de/bc;q)_\infty}{(e;q)_\infty(de/abc;q)_\infty}{}_{3}\phi_{2}\left(\begin{matrix} a, d/b, d/c\\ d, de/bc \end{matrix}; q, \frac{e}{a}\right).
\end{equation}
\end{lemma}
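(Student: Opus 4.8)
The plan is to derive \eqref{eq:32} from first principles using two classical tools: the $q$-binomial theorem (Cauchy's identity) $\sum_{k\ge0}\frac{(\alpha;q)_k}{(q;q)_k}z^k=\frac{(\alpha z;q)_\infty}{(z;q)_\infty}$ valid for $|z|<1$, and Heine's third (``$q$-Euler'') transformation ${}_2\phi_1\!\big(\begin{smallmatrix}u,v\\ w\end{smallmatrix};q,z\big)=\frac{(uvz/w;q)_\infty}{(z;q)_\infty}\,{}_2\phi_1\!\big(\begin{smallmatrix}w/u,w/v\\ w\end{smallmatrix};q,\frac{uvz}{w}\big)$. Throughout I would first impose $|a|<1$ and $|e/a|<1$, together with the generic non-vanishing of denominators, so that every series and infinite product below converges absolutely; the identity then extends to all admissible parameters by analytic continuation, since both sides are ratios of functions analytic in $a,b,c,d,e$.

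First I would break the denominator factor pairing $a$ with $e$. Writing $\frac{(a;q)_n}{(e;q)_n}=\frac{(a;q)_\infty}{(e;q)_\infty}\cdot\frac{(eq^n;q)_\infty}{(aq^n;q)_\infty}$ and expanding the last quotient by the $q$-binomial theorem gives $\frac{(a;q)_n}{(e;q)_n}=\frac{(a;q)_\infty}{(e;q)_\infty}\sum_{k\ge0}\frac{(e/a;q)_k}{(q;q)_k}a^kq^{nk}$. Substituting this into the definition of the left-hand ${}_3\phi_2$ and interchanging the $n$- and $k$-summations turns the $n$-sum into an inner ${}_2\phi_1\!\big(\begin{smallmatrix}b,c\\ d\end{smallmatrix};q,\frac{de}{abc}q^k\big)$, the remaining $k$-sum carrying the factor $\frac{(e/a;q)_k}{(q;q)_k}a^k$.

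Next I would apply Heine's third transformation to this inner ${}_2\phi_1$. With $z=\frac{de}{abc}q^k$ the transformed argument is exactly $\frac{uvz}{w}=\frac{e}{a}q^k$, so the inner series becomes $\frac{(\frac{e}{a}q^k;q)_\infty}{(\frac{de}{abc}q^k;q)_\infty}\,{}_2\phi_1\!\big(\begin{smallmatrix}d/b,d/c\\ d\end{smallmatrix};q,\frac{e}{a}q^k\big)$; the whole point of choosing this particular transformation is that it produces precisely the target upper parameters $d/b,d/c$ and lower parameter $d$. Two telescopings now occur: $(e/a;q)_k(\frac{e}{a}q^k;q)_\infty=(e/a;q)_\infty$ is independent of $k$, and $\frac{1}{(\frac{de}{abc}q^k;q)_\infty}=\frac{(de/abc;q)_k}{(de/abc;q)_\infty}$. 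After these simplifications the double sum separates cleanly, and I would interchange summations once more, summing the $k$-series first by the $q$-binomial theorem to obtain $\frac{(\frac{de}{bc}q^m;q)_\infty}{(aq^m;q)_\infty}$.

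The final step is bookkeeping: rewriting $\frac{(\frac{de}{bc}q^m;q)_\infty}{(aq^m;q)_\infty}=\frac{(de/bc;q)_\infty}{(a;q)_\infty}\cdot\frac{(a;q)_m}{(de/bc;q)_m}$, the stray $(a;q)_\infty$ cancels and the surviving $m$-sum is exactly ${}_3\phi_2\!\big(\begin{smallmatrix}a,d/b,d/c\\ d,de/bc\end{smallmatrix};q,\frac{e}{a}\big)$, while the collected infinite products assemble into the prefactor $\frac{(e/a;q)_\infty(de/bc;q)_\infty}{(e;q)_\infty(de/abc;q)_\infty}$, which is \eqref{eq:32}. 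I expect the main obstacle to be not any single algebraic step but the careful justification of the two interchanges of summation and of the telescoping manipulations: one must work in a parameter region where all the series converge absolutely (hence the assumptions $|a|<1$ and $|e/a|<1$) and then remove these restrictions by analytic continuation in the parameters.
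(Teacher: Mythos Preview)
Your argument is correct: expanding $(a;q)_n/(e;q)_n$ via the $q$-binomial theorem, applying Heine's $q$-Euler transformation to the resulting inner ${}_2\phi_1$, and then resumming with a second application of the $q$-binomial theorem does yield \eqref{eq:32} exactly as you describe; the two telescoping identities and the final bookkeeping are accurate. One small remark: for the inner ${}_2\phi_1$ (and hence Heine's transformation) to apply you also want $|de/(abc)|<1$, which is implicit in requiring the left-hand ${}_3\phi_2$ to converge but is worth stating alongside $|a|<1$ and $|e/a|<1$.

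As for comparison with the paper: there is nothing to compare. The paper does not prove Lemma~\ref{le:32} at all; it is quoted verbatim from Gasper and Rahman \cite[p.~72, Eq.~(3.2.7)]{GR2004} and used as a black box in the proof of Theorem~\ref{th:main}. Your derivation is in fact the standard route to this transformation in the literature (essentially the approach of Gasper--Rahman themselves, who obtain it by iterating Heine's transformations inside a series expansion), so you have supplied a self-contained proof where the paper was content to cite one.
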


\begin{proof}[Proof of Theorem \ref{th:main}]
We have
\begin{align}
&S(\alpha,\beta;q;t)\\
&\quad=\sum_{r\ge 1}\frac{(1-\alpha q^{r})(1-\alpha q^{r+1})\cdots(1-\alpha q^{r+t-2})}{(1-\beta q^{r})(1-\beta q^{r+1})\cdots(1-\beta q^{r+t})}q^r \notag\\
&\quad=\sum_{r\ge 1}\frac{(\alpha;q)_{r+t-1}(\beta;q)_{r}}{(\alpha;q)_{r}(\beta;q)_{r+t+1}}q^r \notag\\
&\quad =\sum_{r\ge 0}\frac{(\alpha;q)_{r+t}(\beta;q)_{r+1}}{(\alpha;q)_{r+1}(\beta;q)_{r+t+2}}q^{r+1} \notag\\
&\quad=\frac{q(1-\beta)(\alpha;q)_t}{(1-\alpha)(\beta;q)_{t+2}}\sum_{r\ge 0}\frac{(q;q)_r (\beta q;q)_{r} (\alpha q^t;q)_{r}}{(q;q)_r(\alpha q;q)_{r}(\beta q^{t+2};q)_{r}}q^r \notag \\
&\quad=\frac{q(\alpha q;q)_{t-1}}{(\beta q;q)_{t+1}}\ {}_{3}\phi_{2}\left(\begin{matrix} q,\beta q,\alpha q^{t}\\ \alpha q,\beta q^{t+2} \end{matrix}; q, q\right) \notag \\
&\quad = \frac{q(\alpha q;q)_{t-1}}{(\beta q;q)_{t+1}}\frac{(\beta q^{t+1};q)_\infty(q^2;q)_\infty}{(\beta q^{t+2};q)_\infty(q;q)_\infty}\ {}_{3}\phi_{2}\left(\begin{matrix} q,\alpha/\beta,q^{1-t}\\ \alpha q,q^{2} \end{matrix}; q, \beta q^{t+1}\right) \tag{by Eq. \eqref{eq:32}}\\
&\quad=  \frac{q(\alpha q;q)_{t-1}}{(1-q)(\beta q;q)_{t}} \sum_{r\ge 0}\frac{(\alpha/\beta;q)_{r}(q^{1-t};q)_{r}}{(\alpha q;q)_{r}(q^2;q)_r}\left(\beta q^{t+1}\right)^r \notag\\
&\quad=   \frac{q(\alpha q;q)_{t-1}}{(1-q)(\beta q;q)_{t}} \frac{(1-\alpha)(1-q)}{\beta q^{t+1}\left(1-\frac{\alpha}{\beta q}\right)(1-q^{-t})}\sum_{r\ge 0}\frac{\left(\frac{\alpha}{\beta q};q\right)_{r+1}(q^{-t};q)_{r+1}}{(\alpha;q)_{r+1}(q;q)_{r+1}}\left(\beta q^{t+1}\right)^{r+1} \notag\\
&\quad= \frac{q}{(\beta q-\alpha)(q^t-1)}\frac{(\alpha;q)_t}{(\beta q;q)_t} \left({}_{2}\phi_{1}\left(\begin{matrix} \frac{\alpha}{\beta q},q^{-t}\\ \alpha \end{matrix}; q, \beta q^{t+1}\right)-1\right) \notag \\
&\quad= \frac{q}{(\beta q-\alpha)(q^t-1)}\frac{(\alpha;q)_t}{(\beta q;q)_t} \left(\frac{(\beta q;q)_t}{(\alpha;q)_t}-1\right) \tag{by Eq. \eqref{eq:chu}}\\
&\quad= \frac{q}{(\beta q-\alpha)(1-q^t)}\left(\frac{(\alpha;q)_t}{(\beta q;q)_t}-1\right). \notag
\end{align}
\end{proof}

\section{Applications}

We now show how Theorem \ref{th:main} may prove \eqref{eq:BK}--\eqref{eq:pot}.

At first, we prove the two new identities \eqref{eq:pdt} and \eqref{eq:pot}. Note that the generating function for partitions counted by $pd_t(n)$ with smallest part equal to $r$ is
$$q^r(1+q^{r+1})(1+q^{r+2})\cdots(1+q^{r+t}).$$
Hence
\begin{align*}
\sum_{n\ge 1}pd_t(n) q^n=\sum_{r\ge 1}(1+q^{r+1})(1+q^{r+2})\cdots(1+q^{r+t})q^r=S(-q,0;q;t+1).
\end{align*}
It follows by Theorem \ref{th:main} that
$$\sum_{n\ge 1}pd_t(n) q^n=S(-q,0;q;t+1)=\frac{1}{1-q^{t+1}}\left((-q;q)_{t+1}-1\right).$$

To see \eqref{eq:pot}, one readily verifies that the generating function for partitions counted by $po_{2t}(n)$ with smallest part equal to $2r-1$ is
$$\frac{q^{2r-1}}{(1-q^{2r-1})(1-q^{2r+1})\cdots(1-q^{2r+2t-1})}.$$
Hence
\begin{align*}
\sum_{n\ge 1}po_{2t}(n) q^n&=\sum_{r\ge 1}\frac{1}{(1-q^{2r-1})(1-q^{2r+1})\cdots(1-q^{2r+2t-1})}q^{2r-1}\\
&=q^{-1}S(0,q^{-1};q^2;t)=\frac{1}{1-q^{2t}}\left(\frac{1}{(q;q^2)_t}-1\right).
\end{align*}
Here the last equality follows again from Theorem \ref{th:main}. We remark that for any positive integer $t$, $po_{2t}(n)=po_{2t+1}(n)$ since only odd parts are allowed in this case. Hence it suffices to consider merely the generating function of $po_{2t}(n)$.

The proofs of \eqref{eq:BK} and \eqref{eq:CY} are similar. We omit the details here.

\subsection*{Acknowledgements}

I would like to thank the referee for careful reading and useful comments.

\bibliographystyle{amsplain}

\end{document}